\def\bb#1\eb{\textcolor{blue}
{#1}} %
\newcommand{\R}{\mathbb R}
\title[Corrigendum to ``Chern connection as a family of affine connections'']{Corrigendum to ``Chern connection of a  pseudo-Finsler metric as a family of affine connections''}
\author[M. A. Javaloyes]{Miguel Angel Javaloyes}
\address{Departamento de Matem\'aticas, \hfill\break\indent
Universidad de Murcia, \hfill\break\indent
Campus de Espinardo,\hfill\break\indent
30100 Espinardo, Murcia, Spain}
\email{majava@um.es}
\date{30.04.2014}
\thanks{This work was partially supported by MINECO (Ministerio de Econom\'{\i}a y Competitividad) and FEDER (Fondo Europeo de Desarrollo Regional) project MTM2012-34037 and Fundaci\'{o}n S\'{e}neca project 04540/GERM/06, Spain. This research is a
result of the activity developed within the framework of the Programme in Support of Excellence Groups of the Regi\'{o}n de Murcia, Spain, by Fundaci\'{o}n S\'{e}neca, Regional Agency for Science and Technology (Regional Plan for
Science and Technology 2007-2010).}
\thanks{2000 {\em Mathematics Subject Classification:} Primary  53C50, 53C60\\
\textbf{Key words:} Finsler metrics, Chern connection, Flag curvature.}
\begin{document}
\newtheorem{thm}{Theorem}[section]
\newtheorem{prop}[thm]{Proposition}
\newtheorem{lemma}[thm]{Lemma}
\newtheorem{cor}[thm]{Corollary}
\theoremstyle{definition}
\newtheorem{defi}[thm]{Definition}
\newtheorem{notation}[thm]{Notation}
\newtheorem{exe}[thm]{Example}
\newtheorem{conj}[thm]{Conjecture}
\newtheorem{prob}[thm]{Problem}
\newtheorem{rem}[thm]{Remark}

\begin{abstract}
In this note we give the correct statements of \cite[Proposition 3.3 and Theorem 3.4]{Jav14} and a formula of the Chern curvature in terms of the curvature tensor $R^V$ of the affine connection $\nabla^V$ and the Chern tensor $P$.
\end{abstract}

\maketitle
\begin{section}{Curvature of two parameter maps}
Throughout this note, we will use the same notation and conventions as in \cite{Jav14} with a small exception: in local calculation $(\Omega,(u^i)^n_{i=1})$ denotes a chart on the base manifold $M$, and $(\pi^{-1}(\Omega),(x^i)^n_{i=1},(y^i)^n_{i=1})$ is the induced chart on $TM$. Then $x^i=u^i\circ\pi$, $y^i(v)=v(u^i)$ for every $v\in T_pM$, $p\in\Omega$. We agree to abbreviate composite mappings like $f\circ g$ as $f(g)$. Vector fields on $\Omega$ can naturally be regarded as local sections of the pull-back bundle $\pi^*(TM)$; we use such harmless identifications all the time.

Proposition 3.3 and Theorem 3.4 in \cite{Jav14} are not correct. In particular, the problem in Proposition 3.3 is that $R^{V}(V,U)W$ depends also on $D_\gamma^{\dot\gamma}U(\gamma(a))$, so it is not independent of the extension of $u$. The corrected versions of such results are given in Theorems \ref{Prop3.3} and \ref{curvaturaway}, respectively. In order to formulate the correct results we need to associate a curvature operator to every two-parameter map.

Let us begin with some definitions. Given a pseudo-Finsler manifold $(M,L)$ and an $L$-admissible vector field $V$ on $\Omega\subset M$, we define a $(1,3)$ tensor $P_V$ by
\[P_V(X,Y,Z)=\frac{\partial}{\partial t}\left(\nabla^{V+tZ}_XY\right)|_{t=0},\]
where  $X,Y,Z$ are arbitrary smooth vector fields on $\Omega$. Let us observe that the tensor $P_V$ is symmetric in its first two arguments because  $\nabla^V$ is torsion-free.
Moreover,  in coordinates $P_V$ is given by
\[P_V(X,Y,Z)=X^j Y^k Z^l P^i_{\,\,jkl}(V) \frac{\partial}{\partial u^k},\]
where $P^i_{\,\,jkl}(V)=\frac{\partial \Gamma^i_{\,\, jk}}{\partial y^l}(V)$. In particular, it is clear  that the value of $P_V(X,Y,Z)$ at a point $p\in \Omega$ depends only on $V(p)$ and not on the extension $V$ used to compute it. From the homogeneity of $\nabla^V$, namely, from the property $\nabla^{\lambda V}=\nabla^V$ for every $\lambda>0$, it follows easily that
\begin{equation}\label{Pprop}
P_V(X,Y,V)=0.
\end{equation}
In \cite[Eq. (7.23)]{Sh01}, this tensor is called the {\it Chern curvature}. To avoid confusion with the Chern curvature in \cite[Eq. (15)]{Jav14} we will refer to it as {\it Chern tensor}. Observe that there is a misprint in \cite[Eq. (15)]{Jav14}. The right formula for Chern curvature is
\begin{equation*}\label{curvchern}
R_v(V(q),U(q))W(q)=V^k U^l W^j(q) R_{j\,\,kl}^{\,\,\,i}(v) \frac{\partial}{\partial u^i}(q),\quad q:=\pi(v).
\end{equation*}
Let us also define the curvature of any $L$-admissible two parameter map
\[
  \Lambda: [a,b]\times (-\varepsilon,\varepsilon)\rightarrow M,\quad   (t,s)\rightarrow \Lambda(t,s).
\]
(Here `$L$-admissible' means that $\Lambda_t(t,s)\in A$ for every $(t,s)\in [a,b]\times (-\varepsilon,\varepsilon)$). To avoid problems with differentiability, we assume that this map can be extended to a smooth map defined in an open subset $(\bar{a},\bar{b})\times (-\varepsilon,\varepsilon)\subset \R^2$, with $[a,b]\subset (\bar{a},\bar{b})$. Then for every smooth vector field $W$ along $\Lambda$ we define the curvature operator
\[R^\Lambda(W):=D_{\gamma_{s}}^{\Lambda_t}D_{\beta_{t}}^{\Lambda_t}W-D_{\beta_{t}}^{\Lambda_t}D_{\gamma_{s}}^{\Lambda_t}W,\]
(see the notation in \cite[Section 3.1]{Jav14}). In the following theorem we will relate the curvature of a two parameter map with the Chern curvature and the tensor $P_V$.
Then we obtain the correct version of \cite[Theorem 3.4]{Jav14}.
\begin{thm}\label{curvaturaway}
Let $(M,L)$ be a pseudo-Finsler manifold. Consider an $L$-ad\-mis\-sible smooth curve $\gamma:[a,b]\rightarrow M$,  an $L$-admissible two parameter map $\Lambda: [a,b]\times (-\varepsilon,\varepsilon)\rightarrow M$ such that $\Lambda(\cdot,0)=\gamma$ and  a smooth vector field $W$ along $\Lambda$. With the above notation,
\begin{equation}\label{curformula}
R^\Lambda(W)=R_{\dot\gamma}(\dot\gamma,U)W+P_{\dot\gamma}(U,W,D_\gamma^{\dot\gamma}\dot\gamma)-P_{\dot\gamma}(\dot\gamma,W,D_\gamma^{\dot\gamma}U),
\end{equation}
where $U$ is the variational vector field of $\Lambda$ along $\gamma$, namely, $U(t)=\Lambda_s(t,0)$.
\end{thm}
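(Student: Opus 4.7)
The plan is to compute $R^\Lambda(W)$ directly in local coordinates. Fix a chart $(\Omega,(u^i))$, write $\Lambda^i=u^i\circ\Lambda$, $\Lambda_t^i=\partial\Lambda^i/\partial t$, $\Lambda_s^i=\partial\Lambda^i/\partial s$, and $W=W^i\partial/\partial u^i$ along $\Lambda$. From the coordinate expression for $\nabla^V$,
\[
D_{\beta_t}^{\Lambda_t} W = \bigl(\partial_t W^i + \Gamma^i_{\,\,jk}(\Lambda_t)\,\Lambda_t^j W^k\bigr)\frac{\partial}{\partial u^i},
\]
and analogously with $(\partial_s,\Lambda_s)$ in place of $(\partial_t,\Lambda_t)$ for $D_{\gamma_s}^{\Lambda_t} W$.

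\textbf{Expansion and grouping.} I would then apply $D_{\gamma_s}^{\Lambda_t}$ to $D_{\beta_t}^{\Lambda_t}W$ (and symmetrically) and subtract. The crucial step is to treat the parameter derivatives of $\Gamma^i_{\,\,jk}(\Lambda_t)$ by viewing $\Gamma^i_{\,\,jk}$ as a smooth function on $TM$ and splitting its derivative into a horizontal part $(\partial\Gamma^i_{\,\,jk}/\partial u^m)(\Lambda_t)\,\Lambda_\bullet^m$ and a vertical part $P^i_{\,\,jkl}(\Lambda_t)\,\partial_\bullet\Lambda_t^l$, where $\bullet\in\{s,t\}$; the latter is a $P$-tensor contribution by the very definition of $P_V$. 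The mixed Euclidean partials $\partial_s\partial_t W^i$ cancel in the subtraction. At $s=0$, where $\Lambda_t=\dot\gamma$ and $\Lambda_s=U$, the surviving terms group into two blocks. The first block, consisting of the horizontal-derivative and quadratic-$\Gamma$ contributions together with certain vertical terms of the form $P^i_{\,\,jkl}(\dot\gamma)\,\Gamma^l_{\,\,pm}(\dot\gamma)\,\dot\gamma^m$, assembles into the coordinate formula for the Chern curvature $R_{\dot\gamma}(\dot\gamma,U)W$; this is exactly where the discrepancy between the ``naive'' affine-connection curvature of $\nabla^V$ and the genuine Chern curvature $R_V$ is absorbed into a $P$-correction. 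The second block carries the non-intrinsic Euclidean second derivatives $\partial_t\Lambda_t^l|_{s=0}=\ddot\gamma^l$ and $\partial_t\Lambda_s^l|_{s=0}$; rewriting these as $(D_\gamma^{\dot\gamma}\dot\gamma)^l$ and $(D_\gamma^{\dot\gamma}U)^l$ modulo Christoffel corrections, and using \eqref{Pprop} to eliminate any spurious $P_{\dot\gamma}(\cdot,\cdot,\dot\gamma)$ that arises, this block collapses precisely to $P_{\dot\gamma}(U,W,D_\gamma^{\dot\gamma}\dot\gamma)-P_{\dot\gamma}(\dot\gamma,W,D_\gamma^{\dot\gamma}U)$.

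\textbf{Main obstacle.} The principal difficulty is the algebraic bookkeeping: keeping track at each step of the horizontal and vertical components of the derivatives of $\Gamma^i_{\,\,jk}(\Lambda_t)$, absorbing into a $P$-term the discrepancy between the affine-connection curvature of $\nabla^V$ and the Chern curvature $R_V$, and repackaging Euclidean second derivatives of $\Lambda$ as covariant derivatives of $\dot\gamma$ and $U$ via \eqref{Pprop}. Once this organisation is carried out carefully, the three terms of \eqref{curformula} emerge with the correct signs.
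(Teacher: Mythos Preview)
Your proposal is correct and follows essentially the same route as the paper's proof: a direct coordinate computation of $R^\Lambda(W)$, splitting $\partial_\bullet[\Gamma^k_{\,\,ij}(\Lambda_t)]$ into its horizontal and vertical parts and then replacing the Euclidean second derivatives $\Lambda_{tt}^p,\Lambda_{ts}^p$ by $(D_\gamma^{\dot\gamma}\dot\gamma)^p,(D_\gamma^{\dot\gamma}U)^p$ plus Christoffel corrections, which (via $\dot\gamma^i\Gamma^k_{\,\,ij}(\dot\gamma)=N^k_{\,\,j}(\dot\gamma)$) combine with the horizontal pieces to produce exactly the coordinate expression of $R_{\dot\gamma}(\dot\gamma,U)W$. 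One small remark: you will not in fact need \eqref{Pprop} here, because those Christoffel corrections land in the third slot of $P$ as $N^l_{\,\,m}(\dot\gamma)$-type terms (not as $\dot\gamma$ itself) and are absorbed directly into the $\delta/\delta x^p$-derivatives defining $R_{\dot\gamma}$, so no spurious $P_{\dot\gamma}(\cdot,\cdot,\dot\gamma)$ term ever appears.
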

\begin{proof}
Observe that using the formula for $D_{\gamma_{s}}^{\Lambda_t}D_{\beta_{t}}^{\Lambda_t}W-D_{\beta_{t}}^{\Lambda_t}D_{\gamma_{s}}^{\Lambda_t}W$ in the proof of \cite[Proposition 3.3]{Jav14} and formulas (13) and (14) in \cite{Jav14}, we get
\begin{multline*}
 R^\Lambda(W)=
 \left[W^iU^j\dot\gamma^p \frac{\partial\Gamma_{\,\,ij}^k}{\partial x^p}
 (\dot\gamma)+W^iU^j\Lambda_{tt}^p\frac{\partial\Gamma_{\,\,ij}^k}{\partial y^p}
 (\dot\gamma)\right.\\-W^i\dot\gamma^j U^p\frac{\partial \Gamma_{\,\,ij}^k}{\partial x^p}(\dot\gamma)-W^i\dot\gamma^j \Lambda_{ts}^p\frac{\partial \Gamma_{\,\,ij}^k}{\partial y^p}(\dot\gamma)\\ \left.
 +W^iU^j \dot\gamma^m \left(\Gamma_{\,\,ij}^l(\dot\gamma) \Gamma_{\,\,lm}^k(\dot\gamma)-\Gamma_{\,\,im}^l(\dot\gamma) \Gamma_{\,\,lj}^k(\dot\gamma)\right)\right]\frac{\partial}{\partial u^k}(\gamma).
 \end{multline*}
Since
 \begin{align*}
\Lambda^p_{tt}&=(D_\gamma^{\dot\gamma}\dot\gamma)^p-\dot\gamma^i\dot\gamma^j \Gamma^p_{\,\,ij}(\dot\gamma),\\
\Lambda^p_{ts}&=(D_\gamma^{\dot\gamma}U)^p-\dot\gamma^iU^j \Gamma^p_{\,\,ij}(\dot\gamma),
\end{align*}
we find that
\begin{multline}\label{Rcur}
 R^\Lambda(W)=
 \left[W^iU^j\dot\gamma^p \frac{\partial\Gamma_{\,\,ij}^k}{\partial x^p}
 (\dot\gamma)-W^iU^j\dot\gamma^m\dot\gamma^n \Gamma_{\,\,mn}^p(\dot\gamma)\frac{\partial\Gamma_{\,\,ij}^k}{\partial y^p}
 (\dot\gamma)\right.\\-W^i\dot\gamma^j U^p\frac{\partial \Gamma_{\,\,ij}^k}{\partial x^p}(\dot\gamma)+W^i\dot\gamma^j \dot\gamma^m U^n \Gamma_{\,\,mn}^p(\dot\gamma)\frac{\partial \Gamma_{\,\,ij}^k}{\partial x^p}(\dot\gamma)\\
 +W^iU^j(D_\gamma^{\dot\gamma}\dot\gamma)^p\frac{\partial\Gamma_{\,\,ij}^k}{\partial y^p}-
 W^i\dot\gamma^j (D_\gamma^{\dot\gamma}U)^p\frac{\partial \Gamma_{\,\,ij}^k}{\partial y^p}(\dot\gamma)\\\left.
 +W^iU^j\dot\gamma^m  \left(\Gamma_{\,\,ij}^l(\dot\gamma) \Gamma_{\,\,lm}^k(\dot\gamma)-\Gamma_{\,\,im}^l(\dot\gamma) \Gamma_{\,\,lj}^k(\dot\gamma)\right)\right]\frac{\partial}{\partial u^k}(\gamma).
 \end{multline}
Finally, \eqref{curformula2} follows easily from the last relation and definitions taking into account that $\dot\gamma^i\Gamma^k_{\,\, ij}(\dot\gamma)=N^k_{\,\, j}(\dot\gamma)$.
\end{proof}
As a consequence of Theorem \ref{curvaturaway},   we can define
\[R^\gamma(\dot\gamma,U)W:=R^\Lambda(\tilde{W})\]
for any vector fields $U$ and $W$ along $\gamma$, where $\Lambda$ is any $L$-admissible two parameter map such that $\Lambda(\cdot,0)=\gamma$ and $\Lambda_s(t,0)=U(t)$, and $\tilde{W}$ is any extension of $W$ to $\Lambda$.  The last theorem ensures that $R^\gamma$ does not depend on the choice of two parameter map, neither on the extension of $W$.
\begin{lemma}\label{nullP}
Given a pseudo-Finsler manifold $(M,L)$ and its Chern tensor $P$, we have that $P_v(v,v,u)=0$ for any $v\in A$ and $u\in T_{\pi(v)}M$.
\end{lemma}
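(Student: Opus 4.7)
The plan is to reduce the claim to a coordinate identity and invoke the spray structure of the Chern connection. In coordinates,
\[
P_v(v,v,u)^i = v^j v^k u^l\, \frac{\partial \Gamma^i_{jk}}{\partial y^l}(v),
\]
so since $u$ is arbitrary it suffices to prove $v^j v^k\, \frac{\partial \Gamma^i_{jk}}{\partial y^l}(v) = 0$ for every $i,l$.

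I would use two structural properties of the Chern connection linking it to the pseudo-Finsler geodesic spray:
(i) $\Gamma^i_{jk}(y) y^j y^k = 2 G^i(y)$, where $G^i$ are the spray coefficients (degree two in $y$) characterised by the geodesic equation $\ddot{\gamma}^{\,i} + 2 G^i(\dot{\gamma}) = 0$;
(ii) the nonlinear connection satisfies $N^i_{\,\,j}(y) := \Gamma^i_{\,\,jk}(y) y^k = \frac{\partial G^i}{\partial y^j}(y)$.
Identity (ii) is the spray-compatibility that is built into the Chern construction.

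Differentiating (i) with respect to $y^l$ yields
\[
\frac{\partial \Gamma^i_{\,\,jk}}{\partial y^l}(y)\, y^j y^k + 2\,\Gamma^i_{\,\,lk}(y)\,y^k = 2\,\frac{\partial G^i}{\partial y^l}(y),
\]
and the last term on either side cancels by (ii). This leaves $y^j y^k \frac{\partial \Gamma^i_{\,\,jk}}{\partial y^l}(y) = 0$; contracting with $u^l$ delivers the conclusion.

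The main obstacle is that the two properties of $P_V$ already at hand --- symmetry in the first two arguments and \eqref{Pprop} --- concern contractions in disjoint index positions and cannot be combined by algebra alone to produce the required identity. The spray-compatibility (ii), characteristic of the Chern connection as opposed to, say, the Berwald connection (where the corresponding Chern tensor vanishes outright), is a genuinely new ingredient.
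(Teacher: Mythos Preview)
Your proof is correct and follows essentially the same route as the paper: both reduce to the coordinate identity $y^jy^k\,\partial \Gamma^i_{\,\,jk}/\partial y^l=0$ and obtain it from the relations $y^k\Gamma^i_{\,\,jk}=N^i_{\,\,j}$, $N^i_{\,\,j}\propto\partial G^i/\partial y^j$ together with homogeneity, your version being slightly more streamlined because you differentiate the double contraction (i) directly rather than in two stages. One side remark is inaccurate, though harmless for the argument: the spray-compatibility (ii) holds equally for the Berwald connection (since $y^k\,\partial N^i_{\,\,j}/\partial y^k=N^i_{\,\,j}$ by Euler), and the Berwald analogue of $P$ is the Berwald tensor, which does \emph{not} vanish for a general Finsler metric.
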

\begin{proof}
It is enough to show that $y^iy^j\frac{\partial \Gamma_{\,\,ij}^k}{\partial y^p}=0$ for any $k=1,\ldots,n$.
Using that $y^i\Gamma^k_{\,\, ij}=N^k_{\,\, j}$ we get
\begin{equation}\label{interm}
  y^i \frac{\partial \Gamma^k_{\,\, ij}}{\partial y^l}=\frac{\partial N^k_{\,\,j}}{\partial y^l}-\Gamma^k_{\,\, lj}.
\end{equation}
Since the functions $N^i_{\,\, j}(v)$ are positive homogeneous of degree one, we have
\begin{equation}\label{relta}
  y^l\frac{\partial N^k_{\,\,j}}{\partial y^l}=N^k_{\,\,j}.
\end{equation}
Using  \eqref{interm} and $y^j\Gamma^k_{\,\, lj}=N^k_{\,\, l}$, it follows that
\begin{equation}\label{interm2}
y^j y^i \frac{\partial \Gamma^k_{\,\, ij}}{\partial y^l}=y^j\frac{\partial N^k_{\,\,j}}{\partial y^l}-y^j\Gamma^k_{\,\, lj}=y^j\frac{\partial N^k_{\,\,j}}{\partial y^l}-N^k_{\,\,l}.
\end{equation}
Introduce the spray coefficients $G^i=\gamma^i_{\,\, jk} y^jy^k$ and observe that $\frac 12 \frac{\partial G^i}{\partial y^j}=N^i_{\,\,j}$ (see \cite[Eq. (3.8.2)]{BaChSh00}). Using the last relation and \eqref{relta}, we get
\begin{equation*}
y^j\frac{\partial N^k_{\,\,j}}{\partial y^l}=y^j\frac{1}{2} \frac{\partial^2 G^k}{\partial y^l\partial y^j}= y^j \frac{\partial N^k_{\,\,l}}{\partial y^j}= N^k_{\,\, l}.
\end{equation*}
Substituting this in \eqref{interm2} we finally conclude that $y^j y^i \frac{\partial \Gamma^k_{\,\, ij}}{\partial y^l}=0$.
\end{proof}
\begin{cor}
For any $L$-admissible curve $\gamma:[a,b]\rightarrow M$ we have
\begin{equation}\label{curformula2}
R^\gamma(\dot\gamma,U)\dot\gamma=R_{\dot\gamma(a)}(\dot\gamma(a),U)\dot\gamma+P_{\dot\gamma}(U,\dot\gamma,D_\gamma^{\dot\gamma}\dot\gamma).
\end{equation}
Therefore, the value of $R^\gamma(\dot\gamma,U)\dot\gamma$ at $t=s_0\in [a,b]$ depends only on $U(s_0)$ and $\gamma$, and not on the particular extension of $U$ used to compute it.
\end{cor}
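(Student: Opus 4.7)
The plan is to specialize Theorem \ref{curvaturaway} to the case $W=\dot\gamma$ and then use Lemma \ref{nullP} to kill one of the three terms.

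First I would choose any $L$-admissible two-parameter map $\Lambda:[a,b]\times(-\varepsilon,\varepsilon)\to M$ with $\Lambda(\cdot,0)=\gamma$ and variational vector field $U$ along $\gamma$, and pick any extension $\tilde{W}$ of $\dot\gamma$ to a vector field along $\Lambda$. By the discussion following Theorem \ref{curvaturaway}, $R^\gamma(\dot\gamma,U)\dot\gamma$ equals $R^\Lambda(\tilde{W})$ and does not depend on any of these choices. Applying \eqref{curformula} with $W$ replaced by $\tilde{W}$ and then restricting to $s=0$, where $\tilde W=\dot\gamma$, yields
\[R^\gamma(\dot\gamma,U)\dot\gamma=R_{\dot\gamma}(\dot\gamma,U)\dot\gamma+P_{\dot\gamma}(U,\dot\gamma,D_\gamma^{\dot\gamma}\dot\gamma)-P_{\dot\gamma}(\dot\gamma,\dot\gamma,D_\gamma^{\dot\gamma}U).\]

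Next I would invoke Lemma \ref{nullP}, which gives $P_v(v,v,u)=0$ for any $v\in A$ and $u\in T_{\pi(v)}M$; applied pointwise with $v=\dot\gamma(t)$ and $u=(D_\gamma^{\dot\gamma}U)(t)$, this makes the last summand vanish identically, leaving exactly \eqref{curformula2}.

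For the independence statement, I would read off the pointwise dependence of each term of \eqref{curformula2} at a fixed $t=s_0$. The Chern curvature term $R_{\dot\gamma(s_0)}(\dot\gamma(s_0),U(s_0))\dot\gamma(s_0)$ is tensorial in $U$, hence depends only on $U(s_0)$ and on $\gamma$. The Chern tensor term $P_{\dot\gamma}(U,\dot\gamma,D_\gamma^{\dot\gamma}\dot\gamma)$ evaluated at $s_0$ depends, by the earlier observation in the paper, only on the values $\dot\gamma(s_0)$, $U(s_0)$, and $(D_\gamma^{\dot\gamma}\dot\gamma)(s_0)$, all of which are determined by $U(s_0)$ and by $\gamma$ alone. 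Combining the two, the whole right-hand side of \eqref{curformula2} at $t=s_0$ depends only on $U(s_0)$ and $\gamma$, so the same is true for $R^\gamma(\dot\gamma,U)\dot\gamma$.

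The only non-routine point is the vanishing of $P_{\dot\gamma}(\dot\gamma,\dot\gamma,D_\gamma^{\dot\gamma}U)$, which is precisely the content of Lemma \ref{nullP}; once that lemma is available, the rest is a direct substitution into formula \eqref{curformula} plus a reading of the pointwise dependence of each term.
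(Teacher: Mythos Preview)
Your proposal is correct and follows exactly the approach indicated by the paper: specialize \eqref{curformula} to $W=\dot\gamma$ and use Lemma \ref{nullP} to eliminate the term $P_{\dot\gamma}(\dot\gamma,\dot\gamma,D_\gamma^{\dot\gamma}U)$. Your added explanation of the pointwise dependence for the independence claim simply spells out what the paper leaves implicit.
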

\begin{proof}
A straightforward consequence from \eqref{curformula} and Lemma \ref{nullP}.
\end{proof}

\section{Relation with the curvature tensor $R^V$}
Let us see  how the curvature tensor $R^V$ relates to the Chern curvature $R_V$ and the Chern tensor $P_V$.
\begin{thm}
Let $(M,L)$ be a pseudo-Finsler manifold, $V$ an $L$-admissible vector field on an open subset $\Omega\subset M$, and let $X,Y,Z$ be arbitrary smooth vector fields on $\Omega$. Then
\begin{equation}\label{cherncur}
R^V(X,Y)Z=R_V(X,Y)Z+P_V(Y,Z,\nabla^V_XV)-P_V(X,Z,\nabla^V_YV).
\end{equation}
\end{thm}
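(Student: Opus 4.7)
The plan is a direct local-coordinate computation, modelled on the proof of Theorem~\ref{curvaturaway}. In a chart $(\Omega,(u^i))$ I would expand
\[
R^V(X,Y)Z = \nabla^V_X\nabla^V_Y Z - \nabla^V_Y\nabla^V_X Z - \nabla^V_{[X,Y]}Z
\]
by the Leibniz rule, writing $\nabla^V_Y Z = \bigl(Y(Z^k) + Y^i Z^j\,\Gamma^k_{ij}(V)\bigr)\,\partial/\partial u^k$ and similarly for the outer derivative. The purely first-order terms $X(Y(Z^k))$ and the mixed pieces $X(Y^i)Z^j\,\Gamma^k_{ij}(V)$, $Y^i X(Z^j)\,\Gamma^k_{ij}(V)$, etc., cancel with the corresponding contributions from $\nabla^V_Y\nabla^V_X Z$ and from $\nabla^V_{[X,Y]}Z$, exactly as in the standard tensoriality argument for the curvature of an affine connection.

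The essential new feature, compared with a fixed affine connection, is that the Christoffel symbols of $\nabla^V$ are $\Gamma^k_{ij}(V)$, so when $X$ differentiates them, the chain rule gives
\[
X\bigl(\Gamma^k_{ij}(V)\bigr) = X^p\,\frac{\partial\Gamma^k_{ij}}{\partial x^p}(V) + X(V^p)\,\frac{\partial\Gamma^k_{ij}}{\partial y^p}(V).
\]
I would then use the identity $X(V^p) = (\nabla^V_X V)^p - X^a V^b\,\Gamma^p_{ab}(V) = (\nabla^V_X V)^p - X^a N^p_a(V)$, which follows from the definition of $\nabla^V_X V$ together with the relation $y^b\Gamma^p_{ab} = N^p_a$ recalled in the proof of Lemma~\ref{nullP}. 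This splits the vertical piece into a genuine Chern-tensor contribution $(\nabla^V_X V)^p\,(\partial\Gamma^k_{ij}/\partial y^p)(V)$ plus a term whose $-X^a N^p_a$ factor merges with $X^p\,(\partial\Gamma^k_{ij}/\partial x^p)(V)$ into the horizontal derivative $\delta\Gamma^k_{ij}/\delta x^p := \partial\Gamma^k_{ij}/\partial x^p - N^q_p\,\partial\Gamma^k_{ij}/\partial y^q$ appearing in the Chern curvature.

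After antisymmetrisation in $(X,Y)$, the terms carrying $(\nabla^V_X V)^p$ and $(\nabla^V_Y V)^p$, each multiplied by $Y^i Z^j\,(\partial\Gamma^k_{ij}/\partial y^p)(V)$ or $X^i Z^j\,(\partial\Gamma^k_{ij}/\partial y^p)(V)$ respectively, collapse by the coordinate definition of $P_V$ given after \eqref{Pprop} into $P_V(Y,Z,\nabla^V_X V) - P_V(X,Z,\nabla^V_Y V)$, producing the last two summands of \eqref{cherncur}. The remaining block, made of the horizontal derivatives $\delta\Gamma/\delta x$ and the quadratic combinations $\Gamma^l_{ij}(V)\Gamma^k_{lm}(V) - \Gamma^l_{im}(V)\Gamma^k_{lj}(V)$, is exactly the coordinate expression of the Chern curvature $R_V(X,Y)Z$ in the form recalled just before Theorem~\ref{curvaturaway}.

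The only real obstacle is bookkeeping: several indices must be relabelled to recognise both $P_V$ and $R_V$ in their definitional forms, and one must check that the $-X^a N^p_a$ correction coming from $X(V^p)$ joins $X^p\,\partial/\partial x^p$ into the horizontal derivative $\delta/\delta x^p$ that is built into $R_V$. This is precisely the algebraic manipulation displayed in \eqref{Rcur}, with the two-parameter data $(\Lambda_t,\Lambda_{tt},\Lambda_{ts},U,W)$ there replaced by $(V,\nabla^V_X V,\nabla^V_Y V,Y,Z)$ here; so no analytic difficulty arises beyond that already handled in the proof of Theorem~\ref{curvaturaway}.
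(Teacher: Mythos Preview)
Your proposal is correct and follows essentially the same route as the paper: both compute $R^V(X,Y)Z$ in coordinates via the standard curvature formula for the affine connection $\nabla^V$, apply the chain rule to $\partial_{u^p}\bigl(\Gamma^k_{ij}(V)\bigr)$, rewrite the vertical piece using $X(V^l)=(\nabla^V_XV)^l-X^m N^l_{\,\,m}(V)$ (equivalently $X^p\partial_{u^p}V^l=(\nabla^V_XV)^l-X^mV^n\Gamma^l_{\,\,mn}(V)$), and then identify the $P_V$ and $R_V$ pieces. The only cosmetic difference is that the paper starts directly from the coordinate expression \eqref{Firstcur} for the curvature of $\nabla^V$ rather than rederiving the first-order cancellations, which you correctly note are the standard ones.
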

\begin{proof}
By definition, using the same notation as in the proof of \cite[Proposition 3.3]{Jav14}, over $\Omega$ we have
\begin{multline}
 R^V(X,Y)Z=
 \left[Z^iY^jX^p \frac{\partial\tilde{\Gamma}_{\,\,ij}^k}{\partial u^p}-Z^iX^j Y^p\frac{\partial \tilde{\Gamma}_{\,\,ij}^k}{\partial u^p}\right.\\\left.
 +Z^iY^j X^m \left(\tilde{\Gamma}_{\,\,ij}^l \tilde{\Gamma}_{\,\,lm}^k-\tilde{\Gamma}_{\,\,im}^l \tilde{\Gamma}_{\,\,lj}^k\right)\right]\frac{\partial}{\partial u^k}.\label{Firstcur}
 \end{multline}
 Now observe that $\frac{\partial\tilde{\Gamma}_{\,\,ij}^k}{\partial u^p}
 =\frac{\partial\Gamma_{\,\,ij}^k}{\partial x^p}(V)+\frac{\partial V^l}{\partial u^p}\frac{\partial\Gamma_{\,\,ij}^k}{\partial y^l}(V)$ and $(\nabla^V_XV)^k=X^p\frac{\partial V^k}{\partial u^p}+X^pV^l\Gamma_{\,\,pl}^k(V)$. Using this, we conclude that
 \begin{multline*}
 X^p \frac{\partial\tilde{\Gamma}_{\,\,ij}^k}{\partial u^p}
 =X^p \frac{\partial\Gamma_{\,\,ij}^k}{\partial x^p}(V)+X^p\frac{\partial V^l}{\partial u^p}\frac{\partial\Gamma_{\,\,ij}^k}{\partial y^l}(V)\\
 =X^p \frac{\partial\Gamma_{\,\,ij}^k}{\partial x^p}(V)+
 (\nabla^V_XV)^l\frac{\partial\Gamma_{\,\,ij}^k}{\partial y^l}(V)
 -X^mV^n\Gamma_{\,\,mn}^l(V)\frac{\partial\Gamma_{\,\,ij}^k}{\partial y^l}(V).
 \end{multline*}
Taking into account  that $V^n\Gamma_{\,\,mn}^l(V)=N^l_{\,\,m}(V)$, we get
 \begin{equation}\label{eq1}
 X^p \frac{\partial\tilde{\Gamma}_{\,\,ij}^k}{\partial u^p}
 =X^p \frac{\partial\Gamma_{\,\,ij}^k}{\partial x^p}(V)+
 (\nabla^V_XV)^l\frac{\partial\Gamma_{\,\,ij}^k}{\partial y^l}(V)
 -X^mN^l_{\,\,m}(V)\frac{\partial\Gamma_{\,\,ij}^k}{\partial y^l}(V).
 \end{equation}
 In the same way,
 \begin{equation}\label{eq2}
 Y^p \frac{\partial\tilde{\Gamma}_{\,\,ij}^k}{\partial u^p}
 =Y^p \frac{\partial\Gamma_{\,\,ij}^k}{\partial x^p}(V)+
 (\nabla^V_YV)^l\frac{\partial\Gamma_{\,\,ij}^k}{\partial y^l}(V)
 -Y^mN^l_{\,\,m}(V)\frac{\partial\Gamma_{\,\,ij}^k}{\partial y^l}(V).
 \end{equation}
 Substituting \eqref{eq1} and \eqref{eq2} in \eqref{Firstcur}, we obtain \eqref{cherncur} in coordinates.
\end{proof}
Finally we can give the correct version of \cite[Propositon 3.3]{Jav14}.
\begin{thm}\label{Prop3.3}
Let $\gamma:[a,b]\rightarrow M$ be a smooth embedded $L$-admissible curve and $V$ an $L$-admissible smooth vector field defined on an open subset $\Omega\subset M$. Assume that $\gamma([a,b])\subset \Omega$ and $V$ coincides with $\dot\gamma$ along $\gamma$. Then
\begin{equation}\label{finaloutput}
R^\gamma(\dot\gamma,U)W=\left(R^V(V,\tilde U)\tilde W+P_V(V,\tilde W,[\tilde{U},V])\right)(\gamma),
\end{equation}
where $U$ and $W$ are smooth vector fields along $\gamma$, and $\tilde{U},\tilde W$ are extension of $U,W$ to $\Omega$, resp.
\end{thm}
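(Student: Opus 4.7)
The plan is to combine Theorem \ref{curvaturaway} with the identity \eqref{cherncur}, and then eliminate the covariant-derivative terms that do not manifestly restrict to the curve by invoking the torsion-freeness of $\nabla^V$.

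First I would apply \eqref{curformula} (taking any $L$-admissible two-parameter map with initial curve $\gamma$ and variational vector field $U$, and using the restriction of $\tilde W$ to that map as the vector field along $\Lambda$) to rewrite the left-hand side as
\[R^\gamma(\dot\gamma,U)W=R_{\dot\gamma}(\dot\gamma,U)W+P_{\dot\gamma}(U,W,D_\gamma^{\dot\gamma}\dot\gamma)-P_{\dot\gamma}(\dot\gamma,W,D_\gamma^{\dot\gamma}U).\]
The goal is then to show that the right-hand side of \eqref{finaloutput} produces the same three terms. For that I would evaluate \eqref{cherncur} with $X=V$, $Y=\tilde U$, $Z=\tilde W$ and restrict along $\gamma$. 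Because $V|_\gamma=\dot\gamma$, $\tilde U|_\gamma=U$, $\tilde W|_\gamma=W$, and since $P_v$ depends pointwise on $v$, the Chern-curvature term $R_V(V,\tilde U)\tilde W$ restricts to $R_{\dot\gamma}(\dot\gamma,U)W$, while each $P_V(\cdots)|_\gamma$ becomes the corresponding $P_{\dot\gamma}(\cdots)$.

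The key algebraic step is to recognize that, in coordinates, $\nabla^V_V V|_\gamma=D_\gamma^{\dot\gamma}\dot\gamma$ and $\nabla^V_V \tilde U|_\gamma=D_\gamma^{\dot\gamma}U$ (this follows at once by comparing the local expressions, since along $\gamma$ the directional derivative of a function in the direction $V=\dot\gamma$ is the $t$-derivative along $\gamma$). Torsion-freeness of $\nabla^V$ then gives
\[\nabla^V_{\tilde U}V\big|_\gamma=\nabla^V_V\tilde U\big|_\gamma+[\tilde U,V]\big|_\gamma=D_\gamma^{\dot\gamma}U+[\tilde U,V]\big|_\gamma.\]
Plugging this into the third argument of $P_V(V,\tilde W,\nabla^V_{\tilde U}V)$, linearity in that slot splits the term into $P_{\dot\gamma}(\dot\gamma,W,D_\gamma^{\dot\gamma}U)+P_V(V,\tilde W,[\tilde U,V])|_\gamma$. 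Similarly $P_V(\tilde U,\tilde W,\nabla^V_V V)|_\gamma=P_{\dot\gamma}(U,W,D_\gamma^{\dot\gamma}\dot\gamma)$. Moving the bracket term to the left-hand side of \eqref{cherncur} and comparing with the expression from Theorem \ref{curvaturaway} yields \eqref{finaloutput}.

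I do not anticipate a genuine obstacle: once the torsion-freeness trick is applied and the two restriction identities for $\nabla^V$ along $\gamma$ are recorded, the rest is term-by-term bookkeeping. The only point requiring care is that the formula for $R^\gamma(\dot\gamma,U)W$ is independent of the extensions $\tilde U,\tilde W$ (guaranteed by the discussion following Theorem \ref{curvaturaway}), so the identity is a genuine equality of objects along $\gamma$ rather than a statement about one particular extension.
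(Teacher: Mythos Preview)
Your proposal is correct and follows exactly the route the paper indicates: combine \eqref{curformula} with \eqref{cherncur} evaluated at $X=V$, $Y=\tilde U$, $Z=\tilde W$, use the restriction identities $\nabla^V_VV|_\gamma=D_\gamma^{\dot\gamma}\dot\gamma$ and $\nabla^V_V\tilde U|_\gamma=D_\gamma^{\dot\gamma}U$, and then apply torsion-freeness to rewrite $\nabla^V_{\tilde U}V$ and isolate the bracket term. You have simply written out the bookkeeping the paper leaves implicit.
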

\begin{proof}
This follows easily from \eqref{curformula} and \eqref{cherncur} since $\nabla^V$ is torsion-free.
\end{proof}
\begin{rem}
Observe that the expression in \cite[Corollary 3.5]{Jav14} is valid. Indeed, more generally, for every $v\in A$ and $u,w\in T_{\pi(v)}M$, it holds that
\begin{equation}\label{predecessor}
K_v(u,w)=\frac{g_v((R^{\gamma_v}(\dot\gamma_v,U)W)(t_0),\dot\gamma_v(t_0))}{L(v)g_v(u,w)-g_v(v,u)g_v(v,w)},
\end{equation}
where $\gamma_v$ is the geodesic such that $\dot\gamma_v(t_0)=v$ and $U,W$ are arbitrary extensions of $u,w$ along $\gamma_v$. Recall the $K_v(u,w)$, the predecessor of the flag curvature, is defined as
\[
K_v(u,w)=\frac{g_v(R_{v}(v,u)w,v)}{L(v)g_v(u,w)-g_v(v,u)g_v(v,w)}.\]
To prove \eqref{predecessor}, we show that if $\gamma$ is a geodesic, then
\begin{equation}\label{FlagCur}
g_{\dot\gamma}(R^\gamma(\dot\gamma,U)W,\dot\gamma)=-g_{\dot\gamma}(R^\gamma(\dot\gamma,U)\dot\gamma,W)
\end{equation}
where $g_{\dot\gamma}$ is given by the rule $g_{\dot\gamma}(X,Y):=g_{\dot\gamma(t)}(X(t),Y(t))$ for any two vector fields $X,Y$ along $\gamma$.
This holds trivially in the interior of the set where $U$ is proportional to $\dot\gamma$, because in this case $[\tilde{U},V]$ is proportional to $V$, and then  applying \eqref{Pprop} and the antisymmetry of $R^V$ in its first two arguments to \eqref{finaloutput}, we get
$R^\gamma(\dot\gamma,U)W=R^\gamma(\dot\gamma,U)\dot\gamma=0$. If $\dot\gamma$ and $U$ are linearly independent, then we can choose extensions $V$ and $\tilde{U}$ with $[\tilde{U},V]=0$, and \eqref{finaloutput} together with \cite[Proposition 3.1]{Jav14} and \cite[Lemma 3.10]{JavSoa14} conclude \eqref{FlagCur}. By continuity we can extend \eqref{FlagCur} to the interval of definition of $\gamma$. As the right hand side of \eqref{FlagCur} does not depend on the extension $U$ of $u$ along $\gamma$, we can compute the left hand side assuming that $D^{\dot\gamma}_\gamma U=0$, and using \eqref{curformula}, we get \eqref{predecessor}.
\end{rem}
\end{section}
\section*{Acknowledgments}
I would like to warmly acknowledge D. Kertesz and professor J. Szilasi  for pointing out the mistake in \cite[Proposition 3.3]{Jav14}.

\end{document}